\newtheorem{thm}{Theorem}[section]
\newtheorem{cor}[thm]{Corollary}
\newtheorem{lem}[thm]{Lemma}
\newtheorem{prop}[thm]{Proposition}
\newtheorem{conj}[thm]{Conjecture}
\title{The number of cliques in graphs covered by long cycles}
\author{Naidan Ji\thanks{School of Mathematics and Statistics, Ningxia University, Yinchuan, Ningxia  750021. Email: jind@nxu.edu.cn.}\; and 
Dong Ye\thanks{Department of Mathematical Sciences and Center for Computational Science, Middle Tennessee State University, Murfreesboro, TN 37132. Email: dong.ye@mtsu.edu. Partially supported by Simons Foundation award no. 359516.}}
\begin{document}

\maketitle
\begin{abstract}
 
Let $G$ be a 2-connected $n$-vertex graph and $N_s(G)$ be the total number of $s$-cliques in $G$. Let $k\ge 4$  and $s\ge 2$ be integers. In this paper, we show that if $G$ has an edge $e$ which is not on any cycle of length at least $k$, then $N_s(G)\le r{k-1\choose s}+{t+2\choose s}$, where $n-2=r(k-3)+t$ and $0\le t\le k-4$. This result settles a conjecture of Ma and Yuan and provides a clique version of a theorem of Fan, Wang and Lv. As a direct corollary, if $N_s(G)> r{k-1\choose s}+{t+2\choose s}$, every edge of $G$ is covered by a cycle of length at least $k$. \bigskip

\noindent {\em Keywords:} clique, long cycle, the Erd\H{o}s-Gallai theorem
\end{abstract}

\section{Introduction} 

All graphs considered in this paper are simple. Let $G$ be a graph and and $N_s(G)$ be the total number of $s$-cliques in $G$ (a complete subgraph with $s$ vertices). Particularly, $N_2(G)$ is the number of edges of $G$, which is often denoted by $e(G)$. 
The well-known Erd\H{o}s-Gallai theorem~\cite{EG} states that if a graph with $n$ vertices has no cycle of length at least $k$ where $n\ge k\ge 3$, then $e(G)\le (k-1)(n-1)/2$, which was originally conjectured by Tur\'an (cf. \cite{FS}). The Erd\H{o}s-Gallai theorem was improved by Kopylov \cite{GK} for 2-connected graphs.

Before presenting Kopylov's result, we need some extra notations. Let $H_{n,k,a}$ be an $n$-vertex graph whose vertex set can be partitioned into three sets $A$, $B$ and $C$ such that $|A|=a$, $|B|=n-(k-a)$ and $|C|=k-2a$, where integers $n, k$ and $a$ satisfy $n\ge k\ge 4$ and $k/2>a\ge 1$, and whose edge set consists of all edges between $A$ and $B$, and all edges in $A\cup C$. Note that $H_{n,k,a}$ is 2-connected if $a\ge 2$ and has no cycle longer than $k-1$. For $s\ge 2$, define
\[f_s(n,k,a)=
 {k-a\choose s} +(n-k+a){a\choose s-1}.
\]
Then $N_s(H_{n,k,a})=f_s(n,k,a)$ and, particularly, $e(H_{n,k,a})=N_2(H_{n,k,a})=f_2(n,k,a)$. The following is Kopylov's result for 2-connected graphs.

\begin{thm}[Kopylov, \cite{GK}] \label{thm:Kopylov}
Let $G$ be a 2-connected graph on $n$ vertices, and let $n\ge k\ge 5$ and $t=\lfloor \frac{k-1} 2\rfloor$. If $G$ has no cycle of length at least $k$, then 
\[e(G) \le \max\{f_2(n,k, 2), f_2(n,k,t)\}, \]
and the equality holds only if $G=H_{n,k,2}$ or $G=H_{n,k,t}$. 
\end{thm}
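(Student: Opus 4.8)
The plan is to bound $e(G)$ by Kopylov's \emph{disintegration} method — iterated deletion of low-degree vertices — driven by the classical bound relating circumference and minimum degree. Write $c=c(G)$ for the circumference. First I would make two reductions. Regarding $f_2(n,k,a)=\binom{k-a}{2}+a(n-k+a)$ as a function of the integer $a$, its second difference equals the constant $3$, so $a\mapsto f_2(n,k,a)$ is convex and $\max_{2\le a\le t}f_2(n,k,a)=\max\{f_2(n,k,2),f_2(n,k,t)\}$; hence it suffices to produce, for the given $G$, \emph{one} integer $a$ with $2\le a\le t$ and $e(G)\le f_2(n,k,a)$. Also $f_2(n,k+1,a)-f_2(n,k,a)=k-2a>0$ whenever $a\le t$, so replacing $k$ by $c+1$ only tightens the target; thus I may assume $c=k-1$ (while keeping $n\ge k$), and any extremal $G$ must satisfy $c=k-1$. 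Finally, by Dirac's theorem that a $2$-connected graph contains a cycle of length at least $\min\{n,2\delta\}$, where $\delta$ is the minimum degree, the inequality $c=k-1<n$ forces $\delta(G)\le\lfloor(k-1)/2\rfloor=t$.

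Now fix a threshold $h$ with $2\le h\le t$ and build the $h$-\emph{disintegrant} $R$ of $G$: repeatedly delete a vertex whose current degree is at most $h$, until none remains. Every deletion destroys at most $h$ edges, and the process passes through graphs of every order between $n$ and $|V(R)|$; consequently, if $|V(R)|\le k-h$, then halting when exactly $k-h$ vertices remain yields $e(G)\le\binom{k-h}{2}+h(n-k+h)=f_2(n,k,h)$, and we are done. It remains to handle a disintegrant with $|V(R)|>k-h$; then $\delta(R)\ge h+1$.

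Here I would take $h=t$ (note $t\ge 2$, so in particular every vertex of degree $2$ is deletable); the arithmetic hinges on $2t+2>k-1$, and this is where the parity of $k$ enters, since $2t+2$ equals $k$ for even $k$ and $k+1$ for odd $k$. If $R$ is nonempty and $2$-connected, then $\delta(R)\ge t+1$, so Dirac's bound gives $c(R)\ge\min\{|V(R)|,2t+2\}$, which together with $c(R)\le c=k-1<2t+2$ forces $R$ to be Hamiltonian of order $|V(R)|\le k-1$. A careful analysis of how the deleted vertices attach to $R$ — using the $2$-connectivity of $G$ to route two internally disjoint paths from a deleted vertex back to a longest cycle, and applying P\'osa-type rotations to that cycle — then either produces a cycle of length at least $k$ in $G$ (contradicting $c=k-1$) or shows that the deleted vertices attach so restrictively, essentially each with at most two neighbors into $R$, that $e(G)\le\binom{k-2}{2}+2(n-k+2)=f_2(n,k,2)$; either way one of the two targets is met. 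If $R$ is not $2$-connected I would run the same analysis on a leaf block of $R$, whose non-cut vertices all have degree $\ge t+1$, invoking a refinement of Dirac's bound that tolerates one exceptional low-degree vertex; and if a large $2$-connected disintegrant should survive, it has $c(R)\le k-1$ on fewer than $n$ vertices, so an induction on $n$ finishes it, the base case $n=k$ being the assertion that a $2$-connected non-Hamiltonian graph on $k$ vertices has at most $\max\{f_2(k,k,2),f_2(k,k,t)\}$ edges — with $H_{k,k,2}$ and $H_{k,k,t}$ the only extremal graphs — proved directly from a longest cycle.

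For the equality statement I would retrace the chain of inequalities: equality forces every deleted vertex to have had degree exactly $h\in\{2,t\}$ when removed, the disintegrant to be a clique on exactly $k-h$ vertices, and — since $G$ is $2$-connected with no cycle of length $k$ — every deleted vertex to be joined to a single fixed $h$-set, which is precisely $G=H_{n,k,h}$. The step I expect to be the main obstacle is the attachment-and-rotation analysis above together with the one-exceptional-vertex strengthening of Dirac's circumference bound: making these interlock with the threshold $h=t$ and with the parity of $k$, so that the count reliably lands on whichever of $f_2(n,k,t)$ and $f_2(n,k,2)$ is the operative bound, is the technical heart of the argument.
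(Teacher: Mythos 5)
First, a remark on context: the paper does not prove this statement at all --- it is Kopylov's 1977 theorem, quoted from \cite{GK} and used as a black box (its only role here is to feed Theorem~\ref{thm:Luo} and Proposition~\ref{prop}). So your proposal has to stand on its own, measured against Kopylov's actual argument, whose skeleton you have correctly identified: the convexity of $a\mapsto f_2(n,k,a)$ (second difference $3$), the monotonicity $f_2(n,k+1,a)-f_2(n,k,a)=k-2a>0$, Dirac's circumference bound forcing $\delta(G)\le t$, and the disintegration count $e(G)\le\binom{k-h}{2}+h(n-k+h)=f_2(n,k,h)$ when the deletion process passes through a graph on $k-h$ vertices. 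All of that is sound and is indeed how the known proof begins.

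The gap is that the remaining case --- the $t$-disintegrant $R$ survives with $|V(R)|>k-t$ and $\delta(R)\ge t+1$ --- is the actual content of the theorem, and you describe it rather than prove it. Your proposed dichotomy (``either a cycle of length at least $k$ appears, or each deleted vertex has at most two neighbours into $R$, whence $e(G)\le f_2(n,k,2)$'') is unjustified and, as stated, insufficient even if true: vertices removed during the $t$-disintegration may legitimately have had up to $t$ neighbours at the moment of deletion, many of them among \emph{other deleted vertices}, so controlling only their attachment to $R$ does not yield $\binom{k-2}{2}+2(n-k+2)$; and one must separately show $e(R)\le\binom{k-2}{2}$, which is a genuine step since, e.g., $R=K_{k-1}$ has circumference $k-1<k$ and too many edges --- ruling it out uses the ambient $2$-connectivity and $n\ge k$, not just $c(R)<k$. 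The standard route here is Kopylov's path lemma (a longest $x$--$y$ path between two nonadjacent vertices of $R$, all of whose neighbours lie on that path, forces a long cycle from $d(x)+d(y)\ge 2t+2\ge k$), which shows $R$ is a clique of bounded order before one recounts with threshold $2$; nothing in your sketch substitutes for this. Your two fallbacks are also not workable as written: the ``refinement of Dirac tolerating one exceptional vertex'' for a non-$2$-connected $R$ is invoked but not supplied, and the ``induction on $n$'' does not close, because the inductive bound $\max\{f_2(m,k,2),f_2(m,k,t)\}$ on $m=|V(R)|$ vertices plus $t(n-m)$ for the deleted vertices exceeds $f_2(n,k,2)$ whenever the first term of the maximum is operative and $t>2$. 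The equality characterization inherits all of these gaps.
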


It is worth mentioning that Fan, Lv and Wang~\cite{FLW} proved a result slightly stronger than the above theorem for $n\ge k\ge 2n/3$. Together with a result of a result of Woodall~\cite{W}, it provided an alternative proof of Theorem~\ref{thm:Kopylov}. Recently, a 
clique version of Theorem~\ref{thm:Kopylov} has been proven by Luo~\cite{RL} as follows.

\begin{thm}[Luo, \cite{RL}]\label{thm:Luo}
Let $G$ be a 2-connected $n$-vertex graph, and let $n\ge k\ge 5$ and $t=\lfloor \frac{k-1} 2\rfloor$. If $G$ has circumference less than $k$, then the number of $s$-cliques of $G$ satisfies
\[N_s(G)\le \max\{f_s(n,k,2), f_s(n,k,t)\}.\]
\end{thm}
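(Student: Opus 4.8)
The plan is to reprove Theorem~\ref{thm:Kopylov} at the level of $s$-cliques, following Kopylov's disintegration method but replacing the edge-deletion count by a clique-deletion count (the case $s=2$ then recovers Kopylov's edge bound as a consistency check). Two reductions organize the argument. First, since the target is the maximum of $f_s(n,k,2)$ and $f_s(n,k,t)$ over the two endpoints $a=2$ and $a=t$, I would record the elementary fact that for fixed $n,k,s$ the map $a\mapsto f_s(n,k,a)=\binom{k-a}{s}+(n-k+a)\binom{a}{s-1}$ is convex on the integer interval $[2,t]$ (a routine second-difference check). A convex function attains its maximum at an endpoint, so it suffices to exhibit, for the given $G$, a single integer $a$ with $2\le a\le t$ and $N_s(G)\le f_s(n,k,a)$. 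Second, the main engine is the $t$-disintegration: repeatedly delete a vertex of current degree at most $t$ until the remaining graph $H$ (the core) has minimum degree at least $t+1$ or is empty.

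The crucial counting observation is that when a vertex $v$ of current degree $d$ is deleted, the number of $s$-cliques of the current graph through $v$ is at most $\binom{d}{s-1}$, since every such clique is $v$ together with an $(s-1)$-clique inside $N(v)$. Telescoping over the deletions yields
\[
N_s(G)\le N_s(H)+\sum_{v}\binom{d_v}{s-1},
\]
where the sum runs over deleted vertices and $d_v\le t$ is the degree of $v$ when it is removed. I would keep the exact degrees $d_v$ rather than the crude bound $d_v\le t$: by the hockey-stick identity, peeling the vertices of a clique $K_m$ one at a time contributes exactly $\binom ms$, and it is this feature that lets the right-hand side reproduce the clique term $\binom{k-a}{s}$ of $f_s(n,k,a)$ instead of a weaker linear-in-$n$ expression.

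Next I would pin down the core. Since $G$ is $2$-connected with circumference at most $k-1<n$, Kopylov's disintegration lemma certifies that a nonempty core retains enough structure (in particular it carries long cycles); combined with the minimum-degree bound $t+1$ and the inequality $2(t+1)\ge k$, together with the classical fact that a $2$-connected graph of minimum degree at least $\delta$ contains a cycle of length at least $\min\{|V|,2\delta\}$, a nonempty core must have at most $k-1$ vertices, else $G$ would contain a cycle of length at least $k$. This splits the proof into two cases. If the core is empty, $G$ disintegrates completely and I would argue that the deletion sum alone is at most $f_s(n,k,t)$. If the core $H$ is nonempty with $m$ vertices, then $N_s(H)\le\binom ms$; I would set $a=k-m$, so that $\binom ms=\binom{k-a}{s}$ matches the clique term and the task reduces to bounding the deletion sum by $(n-m)\binom{a}{s-1}=(n-k+a)\binom{a}{s-1}$, the second term of $f_s(n,k,a)$ (with the boundary values of $m$ checked directly so that the resulting $a$ lies in $[2,t]$).

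The main obstacle is precisely this last estimate, and it is where $2$-connectivity and the circumference bound must be used in tandem. The bound $\binom{d_v}{s-1}\le\binom{t}{s-1}$ is too lossy: it puts $t$ in place of $a$ and overshoots $f_s(n,k,a)$. What is genuinely needed is that a deleted vertex of large degree cannot coexist with a dense core (or with the other deleted vertices, in the empty-core case) without creating a cycle of length at least $k$: if a deleted vertex $v$ had many neighbors, routing a path through the core between two of them — available because the core carries long cycles and $G$ is $2$-connected — would yield a cycle longer than $k-1$. Making this quantitative, so that the deleted vertices effectively attach through a common hub of size $a$ and their contributions are governed by $\binom{a}{s-1}$ rather than $\binom{t}{s-1}$, is the technical heart of the argument and amounts to recovering the structure of the extremal graph $H_{n,k,a}$. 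I expect this exchange/rotation analysis on longest cycles through a candidate high-degree deleted vertex — carried out uniformly in $s$ — rather than the bookkeeping, to require the most care. Once the two cases deliver $N_s(G)\le f_s(n,k,t)$ and $N_s(G)\le f_s(n,k,a)$ with $2\le a\le t$, the convexity reduction of the first paragraph completes the proof.
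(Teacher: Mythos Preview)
The paper does not prove this theorem: it is quoted from Luo~\cite{RL} and invoked as a black box inside the proof of Theorem~\ref{thm:B}. There is therefore no in-paper argument to compare against, and I assess your outline on its own merits (and against Luo's actual proof, which does proceed via Kopylov's disintegration as you propose).

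Your convexity reduction and the empty-core case are correct. The genuine gap is the nonempty-core case. You want the deletion sum bounded by $(n-m)\binom{a}{s-1}$ with $a=k-m$, which would require each deleted vertex to have degree at most $k-m$ at the moment of removal; the $t$-disintegration only guarantees degree at most $t$. Since the core can be as large as $m=k-2$ (for $H_{n,k,2}$ with $k\ge 6$ your $t$-disintegration leaves precisely the $(k{-}2)$-clique $A\cup C$), this means you need $d_v\le 2$ while the disintegration allows $d_v$ up to $t\approx k/2$. Your sentence ``I expect this exchange/rotation analysis on longest cycles through a candidate high-degree deleted vertex \ldots\ to require the most care'' is an acknowledgment that the crux is unproved, not a proof of it. In Luo's argument this step is handled by first passing to an edge-maximal $G$ with circumference $<k$ (harmless, since adding edges only increases $N_s$), so that every non-edge is spanned by a path of length at least $k-1$, and then applying Kopylov's structural lemma relating long paths to the size of $\alpha$-cores; it is that lemma---not a bare rotation argument on a single deleted vertex---that pins down the extremal dichotomy between $a=2$ and $a=t$. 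Until you supply it or an equivalent, the nonempty-core bound does not follow and the proposal is incomplete.
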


A stability result of the Theorem~\ref{thm:Luo} is obtained Ma and Yuan~\cite{MY}, which also can be viewed as the clique version of a stability result of Theorem~\ref{thm:Kopylov} given by F\"uredi, Kostochka and Verstra\"ete~\cite{FKV}.


Another result of Erd\H{o}s and Gallai  in~\cite{EG} shows that a graph without a path of length at least $k$ has $e(G)\le n(k-2)/2$. The result of Erd\H{o}s and Gallai for paths was strengthened by Fan for 2-connected graphs (Theorem 5 in \cite{Fan90}), which states that the longest path between any pair of vertices in a 2-connected graph with more than $(k+2)(n-2)/2$ edges has length at least $k$. Fan's result is sharp when $n-2$ is divisible by $k-2$, which was further sharpened by Wang and Lv~\cite{WL} for all possible values $n\ge 3$. The sharpness of the results of Fan~\cite{Fan90}, Wang and Lv~\cite{WL} can be shown by the following constructions.

Let $X_{n,k}$ to be an $n$-vertex graph defined as follows. Assuming $n-2=r(k-3)+t$ where $0\le t\le k-4$, the graph $X_{n,k}$ consists of three disjoint parts $A$, $B$ and $C$ such that $A$ is an edge $uv$, and $B$ is a union of $r$ vertex disjoint $(k-3)$-cliques, and $C$ is a $t$-clique, and all edges between $A$ and $B\cup C$. For $s\ge 2$, define 
\[g_s(n,k)=\left \{
\setlength{\jot}{5pt} 
\begin{aligned}
& r{k-1\choose s} +{t+2\choose s} & \mbox{ if }s\ge 3;\\
& r{k-3\choose 2}+{t\choose 2}+2(n-2)+1 & \mbox{ if } s=2.
\end{aligned}
\right.
\]
Then $g_s(n,k)=N_s(X_{n,k})$, and $e(X_{n,k})=N_2(X_{n,k})=g_2(n,k)\le r{k-1\choose 2}+{t+2\choose 2}$. 
These graphs $X_{n,k}$ have no cycle containing the edge $uv$ longer than $k-1$. Note that, if $k>n$, then $X_{n,k}$ is a clique and $g_s(n,k)={n\choose s}$. 


\begin{thm}[Fan~\cite{Fan90}, Wang and Lv~\cite{WL} ]\label{thm:Fan}
Let $G$ be a 2-connected $n$-vertex graph with $n\ge 3$. If $G$ has an edge $uv$ such that $G$ has no cycle of length at least $k\ge 4$ containing $uv$. Then
\[e(G)\le g_2(n,k).\]
\end{thm}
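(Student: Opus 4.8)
The plan is to reduce the statement to the path version of the Erd\H{o}s-Gallai theorem due to Fan, together with its sharpening by Wang and Lv.

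First I would record the elementary equivalence behind the reformulation: for a $2$-connected graph $G$, an edge $uv\in E(G)$, and $k\ge 4$, the graph $G$ has a cycle of length at least $k$ through $uv$ if and only if $G$ has a $u$--$v$ path of length at least $k-1$. Deleting the edge $uv$ from a cycle of length $\ell\ge k$ through $uv$ leaves a $u$--$v$ path of length $\ell-1\ge k-1$, and this path uses none of the edge $uv$ because $u$ and $v$ appear on the cycle only as the two ends of that edge. Conversely, a $u$--$v$ path $P$ of length at least $k-1\ge 3$ has $u$ and $v$ only as its endpoints, hence does not use the edge $uv$, so $P$ together with $uv$ forms a cycle of length at least $k$ through $uv$. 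Thus the hypothesis of the theorem says exactly that the two vertices $u$ and $v$ of the $2$-connected graph $G$ are joined by no path of length at least $k-1$.

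Next I would invoke the path theorem with parameter $k-1$. Fan's theorem as recalled above only yields $e(G)\le (k+1)(n-2)/2$, which is not tight; the tight bound is given by the Wang--Lv refinement, whose extremal graphs are precisely the graphs $X_{n,k}$ described above --- these have longest $u$--$v$ path of length $k-2$, equivalently no cycle of length at least $k$ through the edge $uv$, and $e(X_{n,k})=g_2(n,k)$. Applying that refinement gives $e(G)\le g_2(n,k)$, as claimed. Two small points require a word of care: (i) in the path problem one does not assume $uv\in E(G)$, but the extremal edge count is unchanged by that restriction, since if $uv\notin E(G)$ one may add it --- a $u$--$v$ path of length at least $k-1$ never uses the edge $uv$, so $G+uv$ is still $2$-connected and still has no such path while having one more edge; and (ii) when $n<k$ the bound is trivial, as then $X_{n,k}$ is complete and $g_2(n,k)=\binom n2$.

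Given the results being quoted, this argument has no real obstacle beyond checking that the Wang--Lv extremal value equals $g_2(n,k)$, which is exactly the sharpness statement recorded in the excerpt. If one instead wanted a proof not relying on \cite{WL}, I would induct on $n$ in the Erd\H{o}s-Gallai style: in a minimal counterexample, either delete a lowest-degree vertex of $V(G)\setminus\{u,v\}$, or --- guided by the structure of $X_{n,k}$ --- split off a set $W$ of $k-3$ vertices that is attached to the rest of $G$ only through $\{u,v\}$ and apply induction to $G-W$, using that $g_2(n,k)-g_2(n-(k-3),k)=\binom{k-3}2+2(k-3)$ while at most this many edges are removed. The main obstacle in that route, and the step I expect to be hardest, is preserving $2$-connectivity under these reductions --- in particular handling the case where the deleted vertex lies in a $2$-cut together with $u$ or $v$, or where no such set $W$ exists --- which forces one to argue directly about the lengths of $u$--$v$ paths, the delicate core of the Fan and Wang--Lv proofs.
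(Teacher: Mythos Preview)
The paper does not give its own proof of Theorem~\ref{thm:Fan}; it is quoted as a known result of Fan and of Wang--Lv, and is invoked only to dispose of the $s=2$ case of Theorem~\ref{thm:main}. Your proposal is exactly what that citation amounts to: you correctly observe that a cycle of length at least $k$ through the edge $uv$ is equivalent to a $u$--$v$ path of length at least $k-1$, and then appeal to the Wang--Lv codiameter bound, whose extremal value is precisely $g_2(n,k)=e(X_{n,k})$ as the paper records; your remarks (i) and (ii) handling the presence of the edge $uv$ and the range $n<k$ are accurate, and your closing induction sketch is an aside not present (or needed) in the paper.
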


In~\cite{MY}, Ma and Yuan made the following conjecture, which can be treated as a clique version of Theorem~\ref{thm:Fan}. As indicated in~\cite{MY}, the conjecture (if it is true) is a key tool to prove a more general stability result of of Theorem~\ref{thm:Luo}. 

\begin{conj}[Ma and Yuan, \cite{MY}]\label{conj:Ma-Yuan}
Let $G$ be a 2-connected $n$-vertex graph with $n\ge 3$ and let $uv$ be an edge in $G$. Let $k\ge4$ and $s\ge 2$ be integers, and let $n-2=r(k-3)+t$ for some $0\le t\le k-4$. If 
\[N_s(G)>r {k-1\choose s}+{t+2\choose s},\]
then there is a cycle on at least $k$ vertices containing the edge $uv$.  
\end{conj}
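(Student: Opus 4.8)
The plan is to prove the contrapositive: assume $G$ is $2$-connected, $uv\in E(G)$, and no cycle of length at least $k$ passes through $uv$ --- equivalently, $G$ has no $u$--$v$ path of length at least $k-1$ (a cycle through $uv$ of length $\ell$ is $uv$ together with a $u$--$v$ path of length $\ell-1$, and conversely) --- and show $N_s(G)\le r\binom{k-1}{s}+\binom{t+2}{s}=:h(n)$. For $s=2$ this is already contained in Theorem~\ref{thm:Fan}, since $N_2(G)=e(G)\le g_2(n,k)\le r\binom{k-1}{2}+\binom{t+2}{2}$, so from now on $s\ge3$. The proof is by induction on $n$. When $n\le k-1$ every path has fewer than $k-1$ edges, so the hypothesis is vacuous, and $N_s(G)\le\binom{n}{s}=h(n)$ (immediate when $n-2\le k-4$, and when $n=k-1$ one has $h(n)=\binom{k-1}{s}+\binom{2}{s}=\binom{k-1}{s}$ because $s\ge3$).

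For the inductive step, suppose first that $G-\{u,v\}$ is disconnected, with components $D_1,\dots,D_p$, $p\ge2$, and put $G_i:=G[D_i\cup\{u,v\}]$. Each $G_i$ is $2$-connected: since $G$ has no cut vertex, each $D_i$ is joined to both $u$ and $v$, so deleting $u$ or $v$ keeps $G_i$ connected; and a cut vertex $w\notin\{u,v\}$ of $G_i$ cannot separate $u$ from $v$ because the edge $uv$ survives, hence a component of $G_i-w$ would avoid $\{u,v\}$ entirely, and that component would already be separated from the rest of $G$ by $w$, contradicting $2$-connectivity. Each $G_i$ keeps the edge $uv$ and the forbidden-path property, and since the $D_i$ are pairwise non-adjacent and $s\ge3$, every $s$-clique of $G$ lies in exactly one $G_i$; thus $N_s(G)=\sum_iN_s(G_i)\le\sum_ih(n_i)$ by induction, with $\sum_i(n_i-2)=n-2$ and each $n_i<n$. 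It then suffices to prove the superadditivity $\sum_ih(n_i)\le h(n)$, which reduces to $h(m_1)+h(m_2)\le h(m_1+m_2)$; writing $m_i=r_i(k-3)+t_i$ and using $\binom{(k-3)+2}{s}=\binom{k-1}{s}$, this in turn follows from the inequality $\binom{t_1+t_2+2}{s}\ge\binom{t_1+2}{s}+\binom{t_2+2}{s}$ for $s\ge3$ (retain the terms of index $0,1,2,s$ in the Vandermonde expansion of $\binom{(t_1+2)+t_2}{s}$) together with convexity of $x\mapsto\binom{x+2}{s}$; I would record this as a lemma.

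The remaining case --- $G-\{u,v\}$ connected --- carries all the difficulty, and here I plan to use a Kopylov-type disintegration adapted to the fixed edge, never deleting $u$ or $v$. After possibly reducing to the situation where $u,v$ have large degree and $\{u,v\}$ is the only $2$-cut of $G$ (a general $2$-cut $\{x,y\}$ being peeled off similarly, while bookkeeping the longest $x$--$y$ path in the opposite side), fix a threshold $\beta$ slightly above $k/2$ and repeatedly delete vertices of $V(G)\setminus\{u,v\}$ of current degree below $\beta$, each deletion destroying at most $\binom{\beta-1}{s-1}$ cliques. The surviving core $H\supseteq\{u,v\}$ is then either small, so that $N_s(G)$ is controlled by the cliques destroyed plus $\binom{|V(H)|}{s}$, or large with minimum degree at least $\beta$ away from $u,v$, in which case one must derive a contradiction by exhibiting a cycle of length at least $k$ through $uv$ inside $H$. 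In the small-$n$ regime, where this counting is too lossy to reach $h(n)$, one instead appeals to Theorem~\ref{thm:Luo} when $G$ has circumference below $k$ (one checks $\max\{f_s(n,k,2),f_s(n,k,\lfloor(k-1)/2\rfloor)\}\le h(n)$), and treats a long cycle avoiding $uv$ by a direct argument.

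The step I expect to be the main obstacle is that last one: producing a long cycle through the edge $uv$ from density --- entangled with making the count match $r\binom{k-1}{s}+\binom{t+2}{s}$ exactly in the degenerate ranges $t\approx0$, $t\approx k-4$, where $h$ grows most slowly. Unlike in Kopylov's and Luo's theorems, a dense $2$-connected core does not obviously help here, because it may only carry long cycles that avoid $uv$; one genuinely needs a long cycle through the prescribed edge (equivalently, a long $u$--$v$ path). The essential ingredient is therefore a Dirac-type bound --- in the spirit of the result of Woodall cited in the introduction --- guaranteeing in a $2$-connected graph with minimum degree at least $\beta$ a cycle through any prescribed edge of length at least roughly $\min\{2\beta,n\}$; calibrating $\beta$ so that this forces a cycle of length $k$ through $uv$ once $|V(H)|\ge k$, while keeping $\binom{\beta-1}{s-1}$ small enough for the clique bound, and meshing all of this with the $2$-cut reductions, is where the technical heart of the argument lies.
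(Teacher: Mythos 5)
There is a genuine gap, and it sits exactly where you predicted. The parts you do carry out are fine and essentially match the paper: the reduction to $s\ge 3$ via Theorem~\ref{thm:Fan}, the base case $n\le k-1$, and the decomposition along a $2$-cut with the superadditivity $\binom{t_1+2}{s}+\binom{t_2+2}{s}\le\binom{t_1+t_2+2}{s}$ (the paper's Claim~1 and Inequality~(3); note the paper only ever needs to split along $2$-cuts $\{x,y\}$ with $xy\in E(G)$, which spares it your ``bookkeeping of the longest $x$--$y$ path'' for non-adjacent cuts). But the main case --- $G-\{u,v\}$ connected --- is only a plan, and its pivotal ingredient is false as stated. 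You want a Dirac/Woodall-type bound forcing, in a $2$-connected graph of minimum degree $\beta$, a cycle of length roughly $\min\{2\beta,n\}$ through a \emph{prescribed} edge. The extremal graph $X_{n,k}$ with $t=0$ refutes this: it is $2$-connected with minimum degree $k-2$, yet every cycle through $uv$ has length at most $k-1<2(k-2)$ for $k\ge 5$. This is precisely the difference between this problem and Kopylov/Luo: density of a core forces long cycles, but not long cycles through a fixed edge, so the disintegration endgame has no way to reach a contradiction, and no calibration of $\beta$ can rescue it.

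The paper resolves this case by an entirely different mechanism. A counterexample is chosen to minimize $n$ and then to maximize $\ell(G)$, the largest degree of an endpoint of a bad edge. If some vertex $w$ is non-adjacent to $u$, one either contracts $uu_1$ (when $N(u)\cap N(u_1)=\emptyset$) or applies the Kelmans--Fan edge-switching $G[u_1\to u]$ along a shortest $(u,w)$-path avoiding $v$; Lemmas~\ref{lem:B}--\ref{lem:C} guarantee these operations preserve $2$-connectivity (using Claim~1), do not lengthen cycles through $uv$, and do not decrease $N_s$, while strictly increasing $d(u)$ --- contradicting the extremal choice. Hence $u$ is adjacent to all other vertices, whence $G-u$ has no cycle of length at least $k-1$ (any such cycle would extend through $u$ and $v$ to a long cycle containing $uv$), and the exact bound $g_s(n,k)$ drops out of the identity $N_s(G)=N_s(G-u)+N_{s-1}(G-u)$ combined with the sharpened Erd\H{o}s--Gallai clique bound of Theorem~\ref{thm:B} applied to $G-u$. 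If you want to complete your write-up, that edge-switching step is the idea you are missing.
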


Note that, the bound of the above conjecture is not the best possible for the case $s=2$ because of Theorem~\ref{thm:Fan} and $g_2(n,k)< r{k-1\choose 2}+{t+2\choose 2}$ for $n\ge k$. 
The following is our main result, which completely settles Conjecture~\ref{conj:Ma-Yuan}.

\begin{thm}\label{thm:main}
Let $G$ be a 2-connected $n$-vertex graph with $n\ge 3$. If $G$ has an edge $uv$ such that  
$G$ has no cycle containing $uv$ of length at least $k\ge 4$,
then the number of $s$-cliques of $G$ with $s\ge 2$ satisfies
\[N_s(G)\le g_s(n,k).\]
\end{thm}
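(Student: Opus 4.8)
The plan is to argue by induction on $n$, with $s$ fixed. Since the case $s=2$ is precisely Theorem~\ref{thm:Fan}, we may assume $s\ge 3$, so that $g_s(n,k)=r\binom{k-1}{s}+\binom{t+2}{s}$. Two elementary facts about $g_s$ will be used throughout, both immediate from the identity $g_s(n,k)=\max\{\sum_i\binom{n_i+2}{s}:\ n_i\ge 0,\ n_i\le k-3,\ \sum_i n_i=n-2\}$, which in turn follows from the convexity of $x\mapsto\binom{x+2}{s}$: (i) $g_s(n,k')\le g_s(n,k)$ whenever $k'\le k$; and (ii) $g_s(n_1+2,k)+g_s(n_2+2,k)\le g_s(n_1+n_2+2,k)$ (take the optimal partitions for $n_1$ and $n_2$ and merge them). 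The base case $n\le k-1$ is trivial: there $(r,t)$ equals $(0,n-2)$ or $(1,0)$, so $g_s(n,k)=\binom ns\ge N_s(G)$. So assume $n\ge k$.

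First I would dispose of the cases in which $G$ has a $2$-cut $\{a,b\}$. Since $uv\in E(G)$, either $\{a,b\}=\{u,v\}$, or $u,v$ lie in a common $\{a,b\}$-bridge, or exactly one of $u,v$ lies in $\{a,b\}$. If $\{a,b\}=\{u,v\}$, let $B_1,\dots,B_p$ ($p\ge 2$) be the $\{u,v\}$-bridges; each $B_i$ is $2$-connected, contains the edge $uv$, has fewer than $n$ vertices, and inherits the hypothesis, so induction gives $N_s(B_i)\le g_s(|B_i|,k)$; since an $s$-clique (for $s\ge 3$) cannot meet two bridges, $N_s(G)=\sum_i N_s(B_i)\le g_s(n,k)$ by (ii). If instead $u,v$ lie in one bridge $B_1$ of a $2$-cut $\{a,b\}$, then for every other bridge $B_j$ the maximality assumption forces each $a$–$b$ path inside $B_j$ to have length at most $k-3$ (otherwise, joining it to an $a$–$b$ path through $B_1$ that uses the edge $uv$ — such a path exists because $B_1+ab$ is $2$-connected — would produce a cycle through $uv$ of length $\ge k$); hence $B_j+ab$ is $2$-connected with $ab$ on no cycle of length $\ge k-1$, so $N_s(B_j)\le g_s(|B_j|,k-1)$ by induction, while $N_s(B_1)\le g_s(|B_1|,k)$ by induction applied to $B_1+ab$. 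Combining with (i) and (ii) gives $N_s(G)=\sum_i N_s(B_i)\le g_s(n,k)$. The same rerouting argument, with $u$ in the role of $a$, handles a $2$-cut $\{u,b\}$ (or $\{v,b\}$): peel off every bridge avoiding $v$ (resp.\ $u$), which again carries parameter $k-1$, and recurse on the bridge containing both $u$ and $v$. (When $k\in\{4,5\}$ these last two reductions are vacuous, as no such bridge can be $2$-connected with a short-enough internal path.)

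After all this we may assume $G$ is $3$-connected. Let $C$ be a longest cycle containing the edge $uv$, so $|C|\le k-1<n$ and $G-V(C)$ is nonempty; by $3$-connectivity every component $F$ of $G-V(C)$ has $|N_C(F)|\ge 3$, and the maximality of $C$ gives the key structural constraint: for all $x,y\in N_C(F)$, the longest $x$–$y$ path with interior in $F$ is no longer than the $x$–$y$ arc of $C$ avoiding the edge $uv$; in particular no two vertices of $N_C(F)$ are consecutive on $C$ except possibly the pair $\{u,v\}$. I would then write $N_s(G)=N_s(G[V(C)])+\sum_F(\#\,s\text{-cliques meeting }F)$, bound the first term by $\binom{|C|}{s}\le\binom{k-1}{s}$, bound each bridge term by cutting $C$ at the attachment vertices of $F$ and charging the cliques that meet $F$ to the resulting arcs — the constraint above should make $G[F\cup N_C(F)]$, arc by arc, no denser than the corresponding piece of a book $X_{n,k}$ — and assemble the pieces with (ii). I expect this $3$-connected case to be the main obstacle; the awkward features are chords of $C$ joining two attachment vertices of a bridge, and bridges with many attachments, both of which strain the naive arc-by-arc estimate and must be absorbed carefully so that the total lands at exactly $g_s(n,k)$. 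Everything preceding the $3$-connected case is bookkeeping resting on the two convexity facts.
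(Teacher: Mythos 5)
Your reduction to the $3$-connected case is plausible (the bridge decompositions and the two convexity facts about $g_s$ are essentially the paper's Claim~1 and its Inequality~(3), and the rerouting argument that bridges avoiding $uv$ carry the parameter $k-1$ is sound), but the $3$-connected case itself is a genuine gap, not a deferred computation. You write down the standard Kopylov-style setup (longest cycle $C$ through $uv$, bridges $F$ of $C$, the constraint that attachment paths are no longer than the corresponding arcs) and then say the arc-by-arc charging ``should'' work, while yourself flagging chords between attachment vertices and bridges with many attachments as unresolved. Those are exactly the configurations where naive arc-by-arc estimates fail (a bridge with attachments spread over several arcs, or a dense bridge whose cliques straddle $N_C(F)$, is not locally comparable to a piece of $X_{n,k}$), and no mechanism is given for making the charges sum to exactly $g_s(n,k)$. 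Since everything before this point is bookkeeping, the heart of the theorem is unproved.

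The paper avoids this case entirely by a different device. It takes a counterexample minimizing $n$ and then, among those, \emph{maximizing} $\ell(G)$, the largest degree of an endpoint of a bad edge. Picking a bad edge $uv$ with $d(u)=\ell(G)$, it shows $u$ must be adjacent to every other vertex: if some $w$ is non-adjacent to $u$, one either contracts the first edge of a shortest $(u,w)$-path avoiding $v$ (when the endpoints have no common neighbour) to get a smaller counterexample, or performs Fan's edge-switching $G[u_1\to u]$ (Lemmas~\ref{lem:A} and \ref{lem:C}) to get a counterexample with larger $\ell$; Claim~1 (no adjacent $2$-cut) guarantees $2$-connectivity is preserved in both operations. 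Once $u$ dominates, $G-u$ has no cycle of length $\ge k-1$, and the sharpened connected-graph clique bound $N_s(G-u)\le\psi_s(n-1,k-1)$ (Theorem~\ref{thm:B}, which rests on Luo's theorem) gives $N_s(G)=N_s(G-u)+N_{s-1}(G-u)\le g_s(n,k)$ directly. If you want to complete your proof you should either carry out the longest-cycle analysis in full --- which amounts to reproving a version of Luo's theorem relative to an edge --- or replace your $3$-connected case with this switching-plus-dominating-vertex argument, for which Theorem~\ref{thm:B} is the missing ingredient in your write-up.
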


The bound in Theorem~\ref{thm:main} is sharp due to these graphs $X_{n,k}$ constructed above. A direct corollary of Theorem~\ref{thm:main} shows that if the clique number of a graph $G$ is large enough, every edge of $G$ belongs to a long cycle. 

\begin{cor}
Let $G$ be a 2-connected $n$-vertex graph with $n\ge 3$. If
$N_s(G)> g_s(n,k)$ where $k\ge 4$ and $s\ge 2$, then
every edge of $G$ is covered by a cycle of length at least $k$. 
\end{cor}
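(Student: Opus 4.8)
The plan is to prove the statement by contraposition, since the Corollary is, up to rephrasing, logically equivalent to Theorem~\ref{thm:main}. The Corollary quantifies over \emph{all} edges of $G$, whereas Theorem~\ref{thm:main} is phrased for one distinguished edge $uv$; bridging this is the only bookkeeping step, and it is immediate once the right edge is isolated.

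First I would negate the conclusion: suppose that \emph{not} every edge of $G$ is covered by a cycle of length at least $k$. Then, by definition, there exists at least one edge—call it $uv$—that lies on no cycle of length at least $k$. This is precisely the hypothesis ``$G$ has an edge $uv$ such that $G$ has no cycle containing $uv$ of length at least $k$'' appearing in Theorem~\ref{thm:main}. Before invoking that theorem I would check that the remaining hypotheses are inherited: $G$ is $2$-connected with $n\ge 3$, and the integer parameters satisfy $k\ge 4$ and $s\ge 2$, exactly as required. Since all conditions match, Theorem~\ref{thm:main} applies verbatim to this edge $uv$.

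Applying Theorem~\ref{thm:main} then yields $N_s(G)\le g_s(n,k)$. This directly contradicts the standing assumption $N_s(G)>g_s(n,k)$ of the Corollary. Hence the negated conclusion is untenable, and every edge of $G$ must in fact lie on a cycle of length at least $k$, which is what we wanted to show.

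I do not anticipate any genuine obstacle here: all of the combinatorial difficulty is absorbed into Theorem~\ref{thm:main}, and the Corollary is a one-line contrapositive deduction once that theorem is in hand. The only point warranting care is the quantifier management—ensuring that the single offending edge produced by negating ``every edge is covered'' is exactly the object Theorem~\ref{thm:main} takes as input—but this is routine and requires no additional estimates or case analysis.
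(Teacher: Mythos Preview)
Your argument is correct and matches the paper's treatment: the paper presents this corollary as an immediate consequence of Theorem~\ref{thm:main} without a separate proof, and your contrapositive derivation is exactly the intended one-line deduction.
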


\section{Preliminaries}

Let $X_{n,k}$ be an $n$-vertex graph defined in the previous section, and let $Q_{n,k}$ be the $n$-vertex graph with $n\ge 2$ obtained from $X_{n+1, k+1}$ by contracting the edge $uv$ of $A$ into a single vertex $w$. Then for graphs $Q_{n,k}$, it holds that $n-1=r(k-2)+t$ and $0\le t\le k-3$. For $s\ge 2$, define 
\[\psi_s(n,k)=r {k-1\choose s} +{t+1\choose s}.\] 
Then $\psi_s(n,k)=N_s(Q_{n,k})$. Note that, if $k>n$, then $Q_{n,k}$ is a clique and $\psi_s(n,k)={n\choose s}$. 
Let $f_s(n,k,a)$ the function defined in the previous section. By comparing the graphs $H_{n,k,2}$, $H_{n,k,\lfloor \frac{k-1} 2\rfloor}$ and the graph $Q_{n,k}$, it not hard to derive the following proposition. 

\begin{prop} \label{prop}
For integers $n\ge k\ge 5$ and $s\ge 2$, the functions $f_s(n,k,a)$ and $\psi_s(n,k)$ satisfy
\[ \max\{f_s(n,k,2), f_s(n,k, \lfloor \frac{k-1} 2\rfloor)\}\le \psi_s(n,k).\]
\end{prop}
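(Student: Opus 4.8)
The plan is to work directly with the closed forms, reducing the claimed inequality — via a short discrete-convexity argument — to a couple of explicit base cases. Write $a_{0}=\lfloor\frac{k-1}{2}\rfloor$, so that $a_{0}\ge 2$ for $k\ge 5$; it suffices to prove $f_{s}(n,k,a)\le\psi_{s}(n,k)$ for $a\in\{2,a_{0}\}$. Fix such an $a$, fix $k\ge 5$ and $s\ge 2$, and write $n-1=r(k-2)+t$ with $0\le t\le k-3$ as in the definition of $\psi_{s}$; since $n\ge k$ we have $r\ge 1$ (and $t\ge 1$ when $r=1$). If $\binom{a}{s-1}=0$ the claim is immediate: then $f_{s}(n,k,a)=\binom{k-a}{s}$ does not depend on $n$, while $\psi_{s}(n,k)\ge\binom{k-1}{s}\ge\binom{k-a}{s}$. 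So assume $\binom{a}{s-1}\ge 1$, and put $D(r,t)=\psi_{s}(n,k)-f_{s}(n,k,a)$; the goal is $D(r,t)\ge 0$.

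The engine of the argument is that $f_{s}$ and $\psi_{s}$ have very simple first differences in $n$: one has $f_{s}(n+1,k,a)-f_{s}(n,k,a)=\binom{a}{s-1}$ always, while increasing $n$ by one either sends $t\mapsto t+1$ (when $t\le k-4$), contributing $\binom{t+2}{s}-\binom{t+1}{s}=\binom{t+1}{s-1}$, or sends $(r,t)\mapsto(r+1,0)$ (when $t=k-3$), contributing $\binom{k-1}{s}-\binom{k-2}{s}=\binom{k-2}{s-1}$. Hence over one period $t=0,1,\dots,k-3$ the successive increments of $D$ are $\binom{t+1}{s-1}-\binom{a}{s-1}$, which are nondecreasing in $t$; so $t\mapsto D(r,t)$ is discretely convex with minimum at $t=a-1$ (note $1\le a-1\le k-3$ here). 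Crossing a period boundary changes $D$ by $\binom{k-2}{s-1}-\binom{a}{s-1}\ge 0$ since $a\le k-2$, while a whole period changes $D$ by $\binom{k-1}{s}-(k-2)\binom{a}{s-1}$; this last quantity is $\ge 0$ for $a\in\{2,a_{0}\}$ and $k\ge 5$ — for $s=2$ it equals $\tfrac{k-2}{2}\,(k-1-2a)\ge 0$ (with equality exactly when $k$ is odd and $a=a_{0}$), and for $s\ge 3$ it follows from $\binom{k-1}{s-1}\ge 2^{\,s-1}\binom{a_{0}}{s-1}$ (using $a_{0}\le\tfrac{k-1}{2}$) together with $\binom{k-1}{s}=\tfrac{k-s}{s}\binom{k-1}{s-1}$, modulo a direct check of the finitely many small $k$ for which the resulting estimate is not yet decisive (there $a_{0}$ is so small that $\binom{a_{0}}{s-1}$ often vanishes outright). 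Combining these facts, $D(r,t)\ge D(1,a-1)$ for every admissible pair $(r,t)$.

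It therefore remains to verify the single base inequality $D(1,a-1)\ge 0$, i.e.\ $\psi_{s}(k+a-2,k)\ge f_{s}(k+a-2,k,a)$; with $r=1$ and $t=a-1$ this reads $\binom{k-1}{s}+\binom{a}{s}\ge\binom{k-a}{s}+2(a-1)\binom{a}{s-1}$. For $a=2$ it simplifies to $\binom{k-2}{s-1}+\binom{2}{s}\ge 2\binom{2}{s-1}$, which is clear for $k\ge 5$ and all $s\ge 2$ (the cases $s=2,3$ are one line, and the right side is $0$ for $s\ge 4$); for $a=a_{0}$ it is a short binomial computation, which for $s=2$ collapses to $0\ge 0$ when $k$ is odd and to $a_{0}-1\ge 0$ when $k$ is even. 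The only genuinely fiddly ingredient I anticipate is pinning down the per-period bound $\binom{k-1}{s}\ge(k-2)\binom{a_{0}}{s-1}$ and this base case at $a=a_{0}$ uniformly in $s$ — i.e.\ dispatching the small-$k$ exceptions cleanly — while everything else in the argument is soft. (Equivalently one could package the computation graph-theoretically, using $f_{s}(n,k,a)=N_{s}(H_{n,k,a})$ and $\psi_{s}(n,k)=N_{s}(Q_{n,k})$ and viewing both as clique counts of ``book'' graphs whose pages meet in a common clique of size $a$, resp.\ $1$, with circumference $k-1$; the underlying bookkeeping is essentially the same.)
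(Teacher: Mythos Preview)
The paper does not actually prove this proposition: it only remarks that ``by comparing the graphs $H_{n,k,2}$, $H_{n,k,\lfloor(k-1)/2\rfloor}$ and the graph $Q_{n,k}$, it is not hard to derive'' the inequality, and leaves the verification to the reader. So you are supplying an argument rather than competing against one, and indeed your closing parenthetical---reading $f_s$ and $\psi_s$ as clique counts in book-like graphs with a common spine---is exactly the paper's one-line hint. Your computational route (first differences in $n$, discrete convexity of $D$ in $t$, reduction to the single base value $n=k+a-2$) is sound and is a perfectly reasonable way to carry out that comparison.

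The only incomplete spot is the one you already flag: the per-period bound $\binom{k-1}{s}\ge(k-2)\binom{a_0}{s-1}$ and the base case $D(1,a_0-1)\ge 0$ for $s\ge 3$. These really are routine, but ``finitely many small $k$'' should be made explicit rather than gestured at. From your inequality $\binom{k-1}{s-1}\ge 2^{\,s-1}\binom{a_0}{s-1}$ the per-period bound follows whenever $2^{\,s-1}(k-s)\ge s(k-2)$; for every $s\ge 4$ this holds for all $k\ge s+2$, while for $5\le k\le s+1$ one has $a_0\le\lfloor s/2\rfloor\le s-2$, so $\binom{a_0}{s-1}=0$ and there is nothing to prove. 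The sole surviving case is $(s,k)=(3,5)$, where $\binom{4}{3}=4\ge 3=(k-2)\binom{2}{2}$ by inspection. As for the base case, for odd $k$ the inequality $D(1,a_0-1)\ge 0$ is literally the per-period bound again (since $k-a_0=a_0+1$ gives $\binom{k-a_0}{s}-\binom{a_0}{s}=\binom{a_0}{s-1}$ and $2a_0-1=k-2$), and one further application of Pascal's identity handles even $k$. With these loose ends tied off your argument is complete; it simply does explicitly what the paper asserts is ``not hard''.
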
\medskip
 
The following result  slightly strengthens Luo's clique version of the Erd\H{o}s-Gallai theorem (Corollary 1.5 in~\cite{RL}), which serves as an important step toward the proof of our main result---Theorem~\ref{thm:main}. Note that, the bound in this result is sharp because of the graphs $Q_{n,k}$ constructed above.

\begin{thm}\label{thm:B}
Let $G$ be a connected $n$-vertex graph with $n\ge 2$. If $G$ has no cycle of length at least $k\ge 4$,  then the number of $s$-cliques with $s\ge 2$ of $G$ satisfies
\[N_s(G)\le \psi_s(n,k).\] 
\end{thm}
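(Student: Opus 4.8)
The plan is to induct on $n$, with the base cases $n \le k-1$ being trivial since then $G$ is a subgraph of $K_n$ and $N_s(G) \le \binom{n}{s} = \psi_s(n,k)$ (recall $Q_{n,k}=K_n$ when $k>n$). So assume $n \ge k$ and that the bound holds for all smaller connected graphs. Since $G$ has no cycle of length at least $k$, its circumference is at most $k-1$. I would first dispose of the case where $G$ is $2$-connected: then Luo's Theorem~\ref{thm:Luo} gives $N_s(G) \le \max\{f_s(n,k,2), f_s(n,k,\lfloor (k-1)/2\rfloor)\}$, which is at most $\psi_s(n,k)$ by Proposition~\ref{prop}. (A small caveat: Theorem~\ref{thm:Luo} is stated for $k\ge 5$; the case $k=4$, where circumference $\le 3$ means $G$ is a forest of triangles glued in a tree-like fashion, should be handled by hand — for $k=4$, $2$-connected with circumference $<4$ forces $G$ to be a triangle or smaller.)

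The substantive case is when $G$ has a cut vertex. Let $x$ be a cut vertex and write $G = G_1 \cup G_2$ where $G_1, G_2$ are connected, $G_1 \cap G_2 = \{x\}$, $|V(G_1)| = n_1$, $|V(G_2)| = n_2$, and $n_1 + n_2 = n+1$, with $2 \le n_i < n$. Every $s$-clique of $G$ lies entirely in $G_1$ or entirely in $G_2$ (since a clique is $2$-connected, or a single edge, and cannot straddle a cut vertex), so $N_s(G) = N_s(G_1) + N_s(G_2) - N_s(\{x\})$; for $s \ge 2$ the last term is $0$, giving $N_s(G) \le N_s(G_1) + N_s(G_2)$. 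Each $G_i$ is connected with no cycle of length $\ge k$, so by induction $N_s(G_i) \le \psi_s(n_i, k)$. It then remains to prove the purely arithmetic inequality
\[
\psi_s(n_1,k) + \psi_s(n_2,k) \le \psi_s(n_1+n_2-1, k)
\]
whenever $n_1, n_2 \ge 2$. Writing $m_i = n_i - 1 \ge 1$, this is $\psi_s(m_1+1,k) + \psi_s(m_2+1,k) \le \psi_s(m_1+m_2+1,k)$, i.e. a superadditivity statement for the function $m \mapsto \psi_s(m+1,k) = \tilde\psi_s(m)$ defined on positive integers. Since $\tilde\psi_s(m)$ counts $s$-cliques in $Q_{m+1,k}$, which (for $m+1 \ge k$, i.e. $m \ge k-1$) is a "hub vertex joined to a disjoint union of cliques" structure, one checks that $\tilde\psi_s$ is the sum over the blocks of $\binom{k-1}{s} - \binom{1}{s}$ per full $(k-2)$-block plus a boundary term, and that merging two such structures at the hub can only increase the count (the two leftover partial blocks of sizes $t_1, t_2$ combine, and $\binom{t_1+1}{s} + \binom{t_2+1}{s} \le \binom{t_1+t_2+1}{s}$ or spills into a new full block).

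The main obstacle is precisely this arithmetic superadditivity lemma: $\psi_s$ is piecewise-polynomial in $n$ with a floor-type decomposition $n-1 = r(k-2)+t$, so one must carefully track how the remainders $t_1, t_2$ and the block-counts $r_1, r_2$ recombine — in particular the case $t_1 + t_2 \ge k-2$, where a carry occurs and an extra full $(k-1)$-clique's worth of $\binom{k-1}{s}$ appears on the right, needs the inequality $\binom{k-1}{s} + \binom{t_1+t_2-k+3}{s} \ge \binom{t_1+1}{s} + \binom{t_2+1}{s}$, which follows from convexity of $\binom{x}{s}$ together with $t_1+t_2+1 \le (k-1) + (t_1+t_2-k+3)$ and the fact that $(k-1) \ge \max(t_1+1,t_2+1)$. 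I would isolate this as a standalone lemma and prove it by the standard majorization argument: for fixed sum, $\binom{a}{s}+\binom{b}{s}$ with $a+b$ fixed is maximized when the parts are as unequal as possible, subject to each part being at most $k-1$. Once that lemma is in hand, the induction closes cleanly.
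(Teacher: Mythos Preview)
Your proposal is correct and follows essentially the same route as the paper's proof: induction on $n$, the 2-connected case handled via Luo's Theorem~\ref{thm:Luo} together with Proposition~\ref{prop} (with $k=4$ treated separately), and the cut-vertex case reduced to the superadditivity inequality $\psi_s(n_1,k)+\psi_s(n_2,k)\le\psi_s(n_1+n_2-1,k)$, which the paper records as inequality~(\ref{eq1}) and which you correctly justify by the majorization/convexity argument on $\binom{\cdot}{s}$. The only cosmetic differences are your choice of base case ($n\le k-1$ versus the paper's $n\le 3$) and your placement of the $k=4$ discussion; neither affects the argument.
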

\begin{proof}  Let $G$ be a connected $n$-vertex graph with $n\ge 2$. Use induction on $n$, the number of vertices of $G$. The result holds trivially for $n\le 3$.  So assume that $n\ge 4$ in the following, and the theorem holds for all connected graphs with the number of vertices smaller than $n$. 

If $k=4$, every maximal 2-connected subgraph of $G$ is a triangle because the longest cycle of $G$ has length at most $k-1=3$. So each block of $G$ is either a triangle or a single edge. It follows that $N_3(G)\le  (n-1)/ 2$ and equality holds if and only if $G$ is the graph with $(n-1)/ 2$ triangles sharing a common vertex, and $N_2(G)\le \psi_2(n,k)$. Hence
\[N_s(G)\le \psi_s(n,k),\]
and the theorem holds. So, in the following, assume that $k\ge 5$.

First, assume that $G$ is 2-connected. If $n<k$, then 
\[N_s(G)\le {n\choose s}= \psi_s(n,k)\]
and hence the theorem holds. If $n\ge k\ge 5$, then it follows from Theorem~\ref{thm:Luo}  and Proposition~\ref{prop} that
\[ N_s(G)  \le \max\{ f_s(n,k,2), f_s(n,k,\lfloor \frac{k-1} 2\rfloor)\}\le \psi_s(n,k),\]
and the theorem holds.


Hence, we may assume that $G$ has a cut-vertex $v$. Let $H$ be a connected component of $G-v$, and let $G_1=G[H\cup \{v\}]$ and $G_2=G-H$. Then both $G_1$ and $G_2$ are connected and $G_1\cap G_2=\{v\}$. For each $i\in [2]$, let $n_i=|V(G_i)|\ge 2$, and assume $n_i-1=r_i(k-2)+t_i$ with $0\le t_i\le k-3$. Then $n=n_1+n_2-1$. 
Then, for $s\ge 2$, the following inequality holds,
\begin{equation}\label{eq1}
\setlength{\jot}{5pt}
{t_1+1\choose s}+ {t_2+1\choose s}\le 
\left \{ 
\begin{aligned}
& {t_1+t_2+1 \choose s} \mbox{\hspace{3cm} if } t_1+t_2\le k-2;\\
& {k-1\choose s}+{t_1+t_2-k+3\choose s}  \mbox{\hspace{.6cm} if } k-1\le t_1+t_2\le 2(k-3). 
\end{aligned}
\right.
\end{equation}
Applying inductive hypothesis to each $G_i$, we have
\begin{equation*}
\setlength{\jot}{5pt}
\begin{aligned}
\displaystyle N_s(G) & =N_s(G_1)+N_s(G_2) \le \psi_s(n_1, k)+\psi_{s}(n_2, k)\\
                                 &= r_1{k-1\choose s}+{t_1+1\choose s} + r_2{k-1\choose s}+{t_2+1\choose s} \\
                                 &\le \psi_s(n,k),
\end{aligned}
\end{equation*} 
where the last inequality follows from Inequality (\ref{eq1}). This completes the proof. 
\end{proof}

Another ingredient we need to prove Theorem~\ref{thm:main} is {\em edge-switching} operation, which was introduced by Fan~\cite{Fan02} to study subgraph covering. This operation appears in an earlier paper~\cite{AK} of Klemans which studied the probabilities of the number of connected components under this operation.  
 
Let $G$ be a graph and $v$ be a vertex of $G$. Let $N(v)=\{u| uv\in E(G)\}$ and let $N[v]=N(v)\cup \{v\}$. The degree of $v$ in $G$ is denoted by $d_G(v)$ which is equal to $|N(v)|$.  For a given edge $uv$,
an {\em edge-switching} from $v$ to $u$ is to replace each edge $vx$ by a new edge $ux$ for every $x\in N(v)\backslash N[u]$. The resulting graph is called the {\em edge-switching graph} of $G$ from $v$ to $u$, denoted by $G[v\to u]$. 
The following lemma is a trivial observation.

\begin{lem}\label{lem:B}
Let $G$ be a 2-connected graph and let $uv$ be an edge of $G$.\\
{\upshape (i)} If $N(u)\cap N(v)=\emptyset$ and $G/uv$ is not 2-connected, then $\{u,v\}$ is a vertex cut of $G$.\\
{\upshape (ii)} If $N(u)\cap N(v)\ne \emptyset$ and the edge-switching graph $G[v\to u]$ is not 2-connected, then $\{u,v\}$ is a vertex cut of $G$.
\end{lem}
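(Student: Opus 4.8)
The plan is to prove both parts by contradiction, locating a hypothetical cut vertex of the modified graph and showing it must be the ``distinguished'' vertex, from which the stated vertex cut of $G$ falls out at once. First I would dispose of the trivial range: we may assume $n\ge 4$, since otherwise $G=K_3$ and both statements hold vacuously --- every edge of $K_3$ has a common neighbour, so the hypothesis of (i) fails, and no edge of $K_3$ gets switched, so $G[v\to u]=G$ is $2$-connected and the hypothesis of (ii) fails.

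For (i), let $H=G/uv$ and let $w$ be the vertex obtained by identifying $u$ and $v$. Because $N(u)\cap N(v)=\emptyset$, contracting $uv$ creates no parallel edge, so $H$ is a simple connected graph on $n-1\ge 3$ vertices, and since it is not $2$-connected it has a cut vertex $x$. If $x\ne w$, then $x\in V(G)\setminus\{u,v\}$ and $H-x=(G-x)/uv$; contracting an edge leaves the number of components unchanged, so $G-x$ would be disconnected, contradicting the $2$-connectivity of $G$. Hence $x=w$, and then $H-w=G-\{u,v\}$ is disconnected, i.e.\ $\{u,v\}$ is a vertex cut of $G$.

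For (ii), put $G'=G[v\to u]$. By the definition of edge-switching, $N_{G'}(v)=\{u\}\cup(N(u)\cap N(v))$, the adjacencies among $V(G)\setminus\{u,v\}$ are unchanged, and $u$ merely acquires edges to $N(v)\setminus N[u]$; in particular $uv\in E(G')$ and $G'/uv$ has the same underlying simple graph as $G/uv$. I would extract two consequences. First, since contracting an edge preserves connectivity, $G'$ is connected (as $G$ is), and the same reasoning shows, for every $z\in V(G)\setminus\{u,v\}$, that $G'-z=(G-z)[v\to u]$ is connected because $G-z$ is (by $2$-connectivity). Second, $G'-v$ contains $G-v$ as a spanning subgraph, hence is connected, while $G'-u$ is exactly $G-\{u,v\}$ with $v$ re-attached to $N(u)\cap N(v)$. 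Now suppose $G'$ is not $2$-connected; being connected on $n\ge 4$ vertices it has a cut vertex $z$. The two consequences rule out $z=v$ and $z\notin\{u,v\}$, so $z=u$; but if $G-\{u,v\}$ were connected, then since $N(u)\cap N(v)\ne\emptyset$ re-attaching $v$ keeps it connected, so $G'-u$ would be connected --- a contradiction. Hence $G-\{u,v\}$ is disconnected, i.e.\ $\{u,v\}$ is a vertex cut of $G$. The arguments are short, in keeping with the label ``trivial observation''; the only places demanding attention are the bookkeeping identities $H-x=(G-x)/uv$ and $G'-z=(G-z)[v\to u]$ --- which is where the hypotheses $N(u)\cap N(v)=\emptyset$ in (i) (keeping $H$ simple) and the explicit neighbourhood $N_{G'}(v)$ in (ii) enter --- and then the observation that a cut vertex of the modified graph must be the distinguished vertex $w$ (resp.\ $u$). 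I do not anticipate a genuine obstacle beyond this bookkeeping.
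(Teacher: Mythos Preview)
Your proof is correct. The paper does not actually supply a proof of this lemma---it simply labels it a ``trivial observation''---so there is nothing to compare against; your argument fills in precisely the bookkeeping that justifies the label, and the key identifications $H-x=(G-x)/uv$ and $G'-z=(G-z)[v\to u]$ together with the analysis of $G'-u$ are handled correctly.
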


The following lemma shows that the edge-switching operation does not increase the length of longest cycles through certain edges. 

\begin{lem}[Ji and Chen, \cite{JC}]\label{lem:A}
Let $G$ be a connected graph and let $uv$ be an edge. For any edge $ux$, let $k$ be the length of a longest cycle of $G$ containing $ux$. Then the length of a longest cycle containing $ux$ in the edge-switching graph $G[v\to u]$ 
is at most $k$.
\end{lem}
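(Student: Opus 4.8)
The plan is to start from a longest cycle $C'$ in the switched graph $G[v\to u]$ that contains $ux$, write $\ell$ for its length, and then exhibit a cycle of $G$ through $ux$ of length at least $\ell$; this immediately yields $\ell\le k$. The crucial structural observation is that the only edges of $G[v\to u]$ that are \emph{not} edges of $G$ are the new edges $uy$ with $y\in N(v)\setminus N[u]$, and every such edge is incident with $u$. Since $ux\in E(G)$ (so $ux$ is not a new edge) and $C'$ passes through $u$ using exactly two edges at $u$, the cycle $C'$ can contain at most one new edge. If $C'$ contains no new edge at all, then $C'$ is already a cycle of $G$ through $ux$ and there is nothing to prove.

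So the remaining case is that $C'$ uses exactly one new edge $uy$ with $y\in N(v)\setminus N[u]$; note that in $G$ we then have both $uv$ and $vy$ (the latter because $y\in N(v)$), while $y\notin N[u]$. If $v\notin V(C')$, I would simply replace the edge $uy$ by the path $u,v,y$: because $v$ avoids $C'$, this produces a cycle of $G$ through $ux$ of length $\ell+1$, and we are done. The substantive case is $v\in V(C')$, which I handle by a two-edge swap.

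To set this up, traverse $C'$ starting at $u$ so that $x$ is its first neighbor and $y$ its last, the new edge $yu$ closing the cycle, and let $q$ be the successor of $v$ in this traversal (its cycle-neighbor on the way towards $y$). Then $q\ne u$, since the only vertex whose successor is $u$ is $y$, and $v\ne y$ because $v\in N[u]$ while $y\notin N[u]$; and $q\ne y$, since $vy\notin E(G[v\to u])$ (again because $y\notin N[u]$ forces $y\notin N_{G[v\to u]}(v)$), so $y$ is not a cycle-neighbor of $v$. Moreover, in $G[v\to u]$ one has $N_{G[v\to u]}(v)=\{u\}\cup\bigl(N(u)\cap N(v)\bigr)$, so the cycle-neighbor $q\ne u$ of $v$ is a common neighbor of $u$ and $v$ in $G$; in particular $uq\in E(G)$ and $vq\in E(G)$. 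I then delete the two vertex-disjoint edges $vq$ and $uy$ from $C'$ and add the two $G$-edges $vy$ and $uq$. Removing $vq$ and $uy$ splits $C'$ into the arc $u\cdots v$ and the arc $q\cdots y$, and the added edges $vy,uq$ rejoin their endpoints into a single cycle on the same vertex set $V(C')$; this cycle lies entirely in $G$, still contains $ux$ (which sits on the arc $u\cdots v$ and is untouched by the swap), and has length exactly $\ell$. Hence $k\ge\ell$ in every case.

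The one genuinely delicate point, which I expect to be the main obstacle, is the orientation bookkeeping in this last case: the very same two-edge swap would break $C'$ into \emph{two} disjoint cycles rather than one if $q$ were chosen on the wrong side of $v$. I therefore need to argue that taking $q$ as the $y$-side neighbor produces the split into arcs $u\cdots v$ and $q\cdots y$ (and not $u\cdots q$ and $v\cdots y$), so that adding $vy$ and $uq$ reconnects them into a single cycle. Verifying this, together with the two adjacency facts $uq\in E(G)$ and $vy\in E(G)$ that follow from the description of $N_{G[v\to u]}(v)$, is exactly what makes the argument close; the degenerate subcase $v=x$ (when the arc $u\cdots v$ degenerates to the single edge $ux$) is covered by the same recipe and needs no separate treatment.
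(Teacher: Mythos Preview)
The paper does not prove Lemma~\ref{lem:A}; it is quoted from Ji and Chen~\cite{JC} without argument, so there is no in-paper proof to compare against. Your proof is correct and self-contained. The key observations---that every new edge of $G[v\to u]$ is incident with $u$, that $ux$ itself cannot be new because $x\in N[u]$, and hence that a cycle through $ux$ in $G[v\to u]$ uses at most one new edge---are exactly right, and your three cases (no new edge; one new edge with $v\notin V(C')$; one new edge with $v\in V(C')$) are exhaustive. In the last case your choice of $q$ as the $y$-side neighbour of $v$ is the correct one: it guarantees that deleting $vq,uy$ leaves the two arcs $u\cdots v$ and $q\cdots y$, which $vy$ and $uq$ then splice into a single cycle of the same length through $ux$. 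The checks that $q\notin\{u,y\}$ (using $N_{G[v\to u]}(v)=\{u\}\cup(N_G(u)\cap N_G(v))$ and $vy\notin E(G[v\to u])$) and that $uq,vy\in E(G)$ are all valid, and the degenerate subcase $v=x$ is indeed absorbed by the same construction.
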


The contraction and the edge-switching operations do not reduce the number of $s$-cliques except small values of $s$ as shown in the following lemma.

\begin{lem}\label{lem:C}
Let $G$ be a connected graph and let $uv$ be an edge of $G$. \\
{\upshape (i)} If $N(u)\cap N(v)=\emptyset$ and $s\ge 3$, then $N_s(G/uv)\ge N_s(G)$;\\
{\upshape (ii)} For $s\ge 2$, it holds that $N_s(G[v\to u])\ge N_s(G)$. 
\end{lem}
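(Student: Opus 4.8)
The plan is to verify both parts by tracking, for each $s$-clique $K$ of the original graph $G$, what happens to $K$ after the operation, and then bounding any possible loss against a matching gain.

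\medskip

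For part (i), assume $N(u)\cap N(v)=\emptyset$ and $s\ge 3$, and let $w$ be the vertex of $G/uv$ obtained by identifying $u$ and $v$. First I would partition the $s$-cliques of $G$ into those that avoid both $u$ and $v$, those that contain exactly one of $u,v$, and those that contain both. An $s$-clique avoiding $\{u,v\}$ survives unchanged in $G/uv$. An $s$-clique $K$ with $u\in K$, $v\notin K$ gives an $s$-clique of $G/uv$ on vertex set $(K\setminus\{u\})\cup\{w\}$, since every neighbour of $u$ in $K$ becomes a neighbour of $w$; the same holds symmetrically if $v\in K$, $u\notin K$. The only subtlety is that two distinct cliques of $G$ — one through $u$, one through $v$, on the same remaining vertex set — could collapse to the same clique of $G/uv$; but because $N(u)\cap N(v)=\emptyset$, no vertex $x\ne u,v$ is adjacent to both $u$ and $v$, so for a fixed set $S$ of size $s-1$ not containing $u,v$, at most one of $S\cup\{u\}$, $S\cup\{v\}$ can be a clique. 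Finally, an $s$-clique $K$ containing both $u$ and $v$ would need $s-2\ge 1$ further vertices all adjacent to both $u$ and $v$, impossible when $N(u)\cap N(v)=\emptyset$ and $s\ge 3$; so there is nothing to lose here. Hence the map $K\mapsto K$ (or $K\mapsto K$ with $u$ or $v$ replaced by $w$) is an injection from the $s$-cliques of $G$ into those of $G/uv$, giving $N_s(G/uv)\ge N_s(G)$.

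\medskip

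For part (ii), write $G'=G[v\to u]$, recall $G'$ is obtained by replacing each edge $vx$ with $ux$ for $x\in N(v)\setminus N[u]$, and set $D=N(v)\setminus N[u]$. I would again sort the $s$-cliques $K$ of $G$. If $v\notin K$, then $K$ uses no edge incident to $v$, so $K$ is still a clique in $G'$ (edges not at $v$ are untouched, and edges newly added at $u$ only help). If $v\in K$ and $K\cap D=\emptyset$, then every vertex of $K\setminus\{v\}$ lies in $N[u]$, so all edges of $K$ at $v$ are of the form $vx$ with $x\in N[u]$; these edges are retained in $G'$ (they are not switched, as $x\notin D$), so $K$ survives. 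The remaining case is $v\in K$ and $K$ contains some vertex of $D$: here edges at $v$ may be destroyed. But in this case $u\notin K$ (if $u\in K$ then $x\in D$ would be adjacent to $u$, contradicting $x\notin N[u]$), and I claim $K':=(K\setminus\{v\})\cup\{u\}$ is a clique in $G'$: every $x\in K\setminus\{v\}$ satisfies either $x\in N[u]$ already, or $x\in D$ in which case $ux$ is a newly added edge of $G'$; and the edges among $K\setminus\{v\}$ are unchanged. This gives an injection from $s$-cliques of $G$ to $s$-cliques of $G'$ — one must only check the map $K\mapsto K'$ does not collide with a clique $L$ of $G$ with $v\notin L$ that also survived as itself, which is clear because $K'$ contains $u$ and, were $K'=L$, then $L$ would be a clique of $G$ containing $u$ but also the vertices of $D\cap K$, contradicting $D\cap N(u)=\emptyset$; similarly two distinct source cliques cannot map to the same target. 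Therefore $N_s(G')\ge N_s(G)$ for all $s\ge 2$.

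\medskip

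The part I expect to require the most care is the injectivity bookkeeping in both parts — making sure that a clique relocated onto the "new" vertex ($w$ in (i), or $u$ in (ii)) is never accidentally identified with a clique that was already present and untouched in the target graph. The hypothesis $N(u)\cap N(v)=\emptyset$ in (i) (equivalently $D\cap N(u)=\emptyset$, automatic in (ii)) is exactly what rules out these collisions, and also explains why (i) genuinely needs $s\ge 3$: for $s=2$ the edge $uv$ of $G$ itself disappears in $G/uv$ with no replacement, so $N_2$ can drop by one.
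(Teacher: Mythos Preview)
Your argument is correct and follows essentially the same approach as the paper: in both parts you build an explicit injection from the $s$-cliques of $G$ into those of the modified graph, sending a clique to itself when the operation leaves it intact and otherwise replacing $v$ (or $u$, $v$) by the new vertex. The paper's proof is terser---in (i) it simply observes that no $s$-clique with $s\ge 3$ can contain the edge $uv$ when $N(u)\cap N(v)=\emptyset$, and in (ii) it phrases the case split via the edge set $W=\{vx:x\in N(v)\setminus N[u]\}$ rather than your vertex set $D$---but the partition and the map are the same, and your more explicit injectivity bookkeeping only makes the argument cleaner.
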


\begin{proof} (i) Since $N(u)\cap N(v)=\emptyset$, the graph $G$ has no $s$-cliques containing $uv$ for $s\ge 3$. Hence, every $s$-clique of $G$ remains as an $s$-clique in $G/uv$. Hence (1) follows.

(ii) Let $S(G)$ be the set of unlabeled copies of $K_s$ in a graph $G$. Consider an edge-switching from $v$ to $u$, and let $G'=G[v\to u]$.  Denote $W=\{vx| x\in N(v)\backslash N[u]\}$. 
Define a map $\pi : S(G)\to S(G')$ as follows, 
for each $Q\in S(G)$, 
\begin{equation*}
\pi(Q) =
    \begin{cases}
      Q & \text{if $E(Q)\cap W=\emptyset$,}\\
      Q' & \text{otherwise,}
    \end{cases}      
\end{equation*}
where $V(Q')=(V(Q)\backslash \{v\})\cup \{u\}$ and $E(Q')=(E(Q)\backslash W) \cup \{ux\, |\, vx\in W\cap E(Q)\}$. If $vx\in W\cap E(Q)$, then $x\in N(v)\backslash N[u]$ and it follows that $u\notin Q$.  All neighbors of $v$ in $Q$ are neighbors of $u$ in $Q'$. Hence $Q'$ is indeed an $s$-clique of $G'$. So $\pi$ is well-defined. Note that, $\pi(Q_1)\ne \pi(Q_2)$ for two different $s$-cliques $Q_1$ and  $Q_2$ of $G$. Therefore $\pi$ is an injection. So $N_s(G[v\to u])\ge N_s(G)$ and (ii) holds. 
\end{proof}

\section{Proof of Theorem~\ref{thm:main}}

Now, we are ready to prove our main theorem. Note that Theorem~\ref{thm:main}  follows from Theorem~\ref{thm:Fan} directly  for the case $s=2$. In the following, we only need to prove it for $s\ge 3$.    \medskip

\noindent{\bf Proof of Theorem~\ref{thm:main}.}  Suppose to the contrary that $G$ is a counterexample. For an edge $e$ of $G$, let $c_e(G)$ be the maximum length of cycles containing $e$. Then $G$ is a 2-connected $n$-vertex graph with $N_s(G)> g_s(n,k)$ but does have an edge $e$ such that $c_e(G)<k$. Let
\[ \ell (G)=\max\{d_G(v)| v \mbox{ is an end-vertex of some edge } e \mbox{ with } c_e(G)< k\}.\] 
Among all the counterexamples, choose $G$ such that: (1) the number of vertices of $G$ is as small as possible, and (2) subject to (1), $\ell(G)$ is as large as possible.

Note that the theorem holds trivially for $n=3$.  If $k=4$, then $G$ consists of $n-2$ triangles which share a common edge. Then $N_3(G)=n-2= g_3(n,4)$ and $N_s(G)=0$ for $s\ge 4$, a contradiction to that $G$ is a counterexample. So in the following, assume that $n\ge 4$ and $k\ge 5$. 








\medskip
\noindent{\bf Claim~1.} {\sl The graph $G$ does not  have a 2-vertex cut $\{x,y\}$ such that $xy$ is an edge.}
\medskip

\noindent{\em Proof of Claim~1.} If not, assume that $\{x,y\}$ is a vertex cut of $G$ with $xy\in E(G)$. 
Let $H_1$ a connected component of $G-\{x,y\}$. Further, let $G_1=G[V(H_1)\cup \{x,y\}]$ and $G_2=G-H_1$. 
Then both $G_1$ and $G_2$ are 2-connected.  For convenience, let $|V(G_i)|=n_i\ge 3$, and
$n_i-2=r_i(k-3)+t_i$ and $0\le t_i\le k-4$ for $i\in [2]$. 

If $N_s(G_i)> g_s(n_i,k) $ for some $i\in [2]$, without loss of generality assume $N_s(G_1)>g_s(n_1,k)$. Since $G$ is a counterexample with the smallest number of vertices, the subgraph $G_1$ is smaller and hence not a counterexample. Therefore, $c_e(G_1)\ge k$ for any edge $e\in E(G_1)$. So $c_e(G)\ge c_e(G_1)\ge k$ for each $e\in E(G_1)$. 
For an edge $e'\in E(G_2)$ and $e'\ne xy$, it follows from 2-connectivity of $G_2$ that $G_2$ has a cycle $C'$ containing both $e'$ and $xy$. Let $C$ be a longest cycle of $G_1$ containing $xy$. Then $(C\cup C')-\{xy\}$ is a cycle of $G$ which contains $e'$ and \[c_{e'}(G)\ge |V(C)|+|V(C')|-2> |V(C)|\ge c_{xy}(G_1)\ge k.\]
Thus, $c_e(G)\ge k$ for any edge $e\in E(G)$, a contradiction to that $G$ has an edge $e$ with $c_e(G)<k$. Hence $N_s(G_i)\le g_s(n_i,k)$ holds for both $i=1$ and $i=2$. Therefore, 
\begin{equation}\label{eq2}
\setlength{\jot}{5pt} 
\begin{aligned}
N_s(G) & = N_s(G_1)+N_s(G_2)\le g_s(n_1,k)+g_s(n_2,k)\\
             &= r_1{k-1\choose s} +{t_1+2\choose s} +r_2{k-1\choose s}+{t_2+2\choose s} \\
             &= (r_1+r_2){k-1\choose s}+{t_1+2\choose s}+{t_2+2\choose s}.
\end{aligned}
\end{equation}
For $s\ge 3$, the following inequality holds,
\begin{equation}\label{eq3}
\setlength{\jot}{5pt}
{t_1+2\choose s}+ {t_2+2 \choose s}\le 
\left \{ 
\begin{aligned}
& {t_1+t_2+2 \choose s} \mbox{\hspace{3cm} if } t_1+t_2\le k-4;\\
& {k-1\choose s}+{t_1+t_2-k+5 \choose s}  \mbox{\hspace{.6cm} if } k-3\le t_1+t_2\le 2(k-4). 
\end{aligned}
\right.
\end{equation}
Note that, $n=n_1+n_2-2$ and hence $n-2=(r_1+r_2)(k-3)+(t_1+t_2)$, which implies $r=r_1+r_2$ and $t=t_1+t_2$ if $t_1+t_2\le k-4$, and $r=r_1+r_2+1$ and $t=t_1+t_2-k+3$ if $k-3\le t_1+t_2\le 2(k-4)$. Combining Inequalities~(\ref{eq2}) and (\ref{eq3}), it follows that
\[ N_s(G) \le g_s(n,k).\]
This yields a contradiction to that $N_s(G)>g_s(n,k)$, which completes the proof of Claim~1. 
\medskip

Choose an edge $uv$ of $G$ such that $c_{uv}(G)<k$ and $d_G(v)\le d_G(u)=\ell(G)$.  Then the vertex $u$ satisfies the following claim.

\medskip
\noindent{\bf Claim~2.} {\sl The vertex $u$ is adjacent to all vertices of $G-u$.}
\medskip

\noindent{\em Proof of Claim~2.} Suppose to the contrary that there exists a vertex $w$ in $G$ such that $uw\notin E(G)$. Since $G$ is 2-connected, there is a  $(u,w)$-path $P$ which does not contain $v$. Choose $P=uu_1\cdots u_k w$ to be a shortest $(u,w)$-path avoiding $v$. Then $uu_i\notin E(G)$ for $i\ge 2$. If $N(u) \cap N(u_1)=\emptyset$, then contract the edge $uu_1$ and let $G'=G/uu_1$. By (i) of Lemma~\ref{lem:B} and Claim~1, the graph $G'$ is 2-connected. Since $uu_1\ne uv$, it follows that 
\[c_{uv}(G')\le c_{uv}(G)< k.\] 
Since $G'$ is smaller than $G$,  the graph $G'$ is not a counterexample and hence
$N_s(G')\le g_s(n-1,k)$. It follows from (i) of Lemma~\ref{lem:C}  that
\[N_s(G)\le N_s(G')\le g_s(n-1,k)\le g_s(n,k),\]
which contradicts the assumption $N_s(G)>g_s(n,k)$. 

So assume that $N(u)\cap N(u_1)\ne \emptyset$. Let $G''=G[u_1\to u]$. Then $d_{G''}(u)>d_G(u)$ because $u_2$ and  $w$ are not adjacent to $u$.  By (ii) of Lemma~\ref{lem:B} and  Claim~1, the graph $G''$ is 2-connected. It follows from Lemma~\ref{lem:A} that $c_{uv}(G'')\le c_{uv}(G)< k$. 
Further, by (ii) of Lemma~\ref{lem:C}, it holds that
\[N_s(G'')\ge N_s(G)>g_s(n,k).\]
Then $\ell(G'')\ge d_{G''}(u)> d_{G}(u)=\ell(G)$ because $uu_2\notin E(G)$, which contradicts the maximality of $\ell(G)$.
This completes the proof of Claim~2. 
\medskip


By Claim~2, $u$ is adjacent to all other vertices of $G$. Further, we have the following claim. 

\medskip
\noindent{\bf Claim~3.}  {\sl The graph $G-u$ has no cycle of length at least $k-1$. } 
\medskip

\noindent {\em Proof of Claim~3.} Suppose to the contrary that $G-u$ has a cycle $C=x_1x_2\ldots x_lx_1$ with $l\ge k-1$.
If $v\in V(C)$, let $v=x_1$ (relabelling $x_i$'s if necessary). Then $uvx_2\ldots x_l u$ is a cycle of length at least $k$ containing $uv$ in $G$ because $u$ is adjacent to all other vertices of $G$, a contradiction to $c_{uv}(G)< k$. Now assume $v\notin V(C)$. Since $G$ is 2-connected, the graph $G-u$ is connected. Hence $G-u$ has a path from $v$ to $C$ which is internally disjoint from $C$. Without loss of generality, assume $x_1$ is the end-vertex of $P$ on $C$. Then $uvPx_1x_2\ldots x_l u$ is a cycle of $G$ which has length at least $k$, a contradiction to $c_{uv}(G)<k$. This completes the proof of Claim~3.
\medskip

By Claim~3 and Theorem~\ref{thm:B}, we have
\[N_s(G-u)\le \psi_s(n-1,k-1)= r {k-2 \choose s}+{t+1\choose s},\]
where $(n-1)-1=r(k-3)+t$ and $0\le t\le k-4$.
By Claim~2, it follows that
\begin{equation*}
\setlength{\jot}{5pt} 
\begin{aligned}
N_s(G)&= \displaystyle N_s(G-u)+N_{s-1}(G-u)\\
              &\le \displaystyle r{k-2\choose s}+{t+1\choose s} + r{k-2\choose s-1}+ {t+1\choose s-1}\\
              &= \displaystyle r{k-1\choose s}+{t+2\choose s}\\
              &=g_s(n,k),
\end{aligned}
\end{equation*}       
which yields a desired contradiction to $N_s(G)>g_s(n,k)$. This completes the proof.
\qed


\begin{thebibliography}{40}
\parskip=-0.05cm

\bibitem{EG} P. Erd\H{o}s and T. Gallai, On maximal paths and circuits of graphs, {\em Acta Math. Hungar.} {\bf 10} (3) (1959) 337--356.

\bibitem{Fan90} G. Fan, Long cycles and the codiameter of a graph I, {\em J. Combin. Theory Ser. B} {\bf 49} (1990) 151--180.

\bibitem{Fan02} G. Fan, Subgraph coverings and edge switchings, {\em J. Combin. Theory Ser. B} {\bf 84} (2002) 54--83.

\bibitem{FLW} G. Fan, X. Lv and P. Wang, Cycles in 2-connected graphs, {\em J. Combin. Theory Ser. B} {\bf 92} (2004) 379--394.

\bibitem{FKV} Z. F\"uredi, A. Kostochka and J. Verstra\"ete, Stability in the Erd\H{o}s-Gallai Theorem on cycles and
paths, {\em J. Combin. Theory Ser. B} {\bf 121} (2016) 197--228.

\bibitem{FS} Z. F\"uredi and M. Simonovits, The history of degenerate (bipartite) extremal graph problems,
Bolyai Math. Studies 25, pp. 169–264, Erd\H{o}s Centennial (L. Lov\'asz, I. Ruzsa, and V. T. S\'os,
Eds.) Springer, 2013.

 \bibitem{JC} N. Ji and M. Chen, Graphs with almost all edges in long cycles, {\em Graphs Combin.} {\bf 34} (2018) 1295--1314.

\bibitem{AK} A.K. Kelmans, On graphs with randomly deleted edges, {\em Acta Math. Acda. Sci. Hungar.} {\bf 37} (1981) 77--88.
 
\bibitem{GK}  G. N. Kopylov, On maximal paths and cycles in a graph, {\em Soviet Math. Dokl.} {\bf 18} (1977) 593--596.

\bibitem{RL} R. Luo, The maximum number of cliques in graphs without long cycles, {\em J. Combin. Theory Ser.
B } {\bf 128} (2018) 219--226.

\bibitem{MY} J. Ma and L.-T. Yuan, A clique version of the Erd\H{o}s-Gallai stability theorem, (2020), arXiv: 2010.13667v2. 
 
\bibitem{WL} P. Wang and X. Lv, The codiameter of 2-connected graphs, {\em Discrete Math.} {\bf 308} (2008) 113--122.

\bibitem{W} D. R. Woodall, Maximal circuits of graphs I, {\em Acta Math. Hungar.} {\bf 28}
(1976), 77--80.

\end{thebibliography}
\end{document}